\newtheorem{thm}{Theorem}
\newtheorem{exam}{Example}
\newtheorem{defn}{Definition}
\newtheorem{prop}{Proposition}
\newtheorem{cor}{Corollary}
\newtheorem{rem}{Remark}
\newcommand{\uloopr}[1]{\ar@'{@+{[0,0]+(-4,5)}@+{[0,0]+(0,10)}@+{[0,0] +(4,5)}}^{#1}}
\newcommand{\uloopd}[1]{\ar@'{@+{[0,0]+(5,4)}@+{[0,0]+(10,0)}@+{[0,0]+ (5,-4)}}^{#1}}
\newcommand{\dloopr}[1]{\ar@'{@+{[0,0]+(-4,-5)}@+{[0,0]+(0,-10)}@+{[0, 0]+(4,-5)}}_{#1}}
\newcommand{\dloopd}[1]{\ar@'{@+{[0,0]+(-5,4)}@+{[0,0]+(-10,0)}@+{[0,0 ]+(-5,-4)}}_{#1}}
\newcommand{\luloop}[1]{\ar@'{@+{[0,0]+(-8,2)}@+{[0,0]+(-10,10)}@+{[0, 0]+(2,2)}}^{#1}}
\newcommand{\Z}{\mathbb{Z}}
\newcommand{\ra}{\rightarrow}
\newcommand{\ideal}{\lhd}
\newcommand{\setmid}[2]{\{\,  #1 \mid #2 \,\} }
\newcommand{\extcent}[1]{C(#1)}
\newcommand{\mart}[1]{{\rm Frac}(#1)}
\begin{document}

\title{The Dixmier-Moeglin equivalence for Leavitt path algebras
 \thanks{The first author is partially supported by the U.S. National Security Agency under grant number H89230-09-1-0066. The second author was supported by NSERC grant 31-611456.}
}


\author{Gene Abrams, \  Jason P. Bell, \ and    Kulumani M. Rangaswamy \\ 
}


\institute{G. Abrams \at
              University of Colorado at Colorado Springs \\
              Colorado Springs, CO 80918 \ U.S.A. \\
              Tel.: 7192553182\\
              Fax: 7192553605\\
              \email{abrams@math.uccs.edu}           
           \and
           J. Bell  \at
              Simon Fraser University \\
              Burnaby, BC, Canada \ V5A 1S6 \\
              \email{jpb@math.sfu.ca}
              \and
              K. M. Rangaswamy  \at
              University of Colorado at Colorado Springs \\
              Colorado Springs, CO 80918 \ U.S.A. \\
              \email{krangasw@uccs.edu}
}

\date{Received: date / Accepted: date}

\maketitle

\begin{abstract}
Let $K$ be a field, let $E$ be a finite directed graph, and let $L_K(E)$ be the Leavitt path algebra of $E$ over $K$.  We show that for a prime ideal $P$ in $L_K(E)$, the following are equivalent:
\begin{enumerate}
\item $P$ is primitive;
\item $P$ is rational;
\item $P$ is locally closed in ${\rm Spec}(L_K(E))$.
\end{enumerate}
We show that the prime spectrum ${\rm Spec}(L_K(E))$ decomposes  into a finite disjoint union of subsets, each of which is homeomorphic to ${\rm Spec}(K)$ or to ${\rm Spec}(K[x,x^{-1}])$.  In the case that $K$ is infinite, we show that $L_K(E)$ has a rational $K^{\times}$-action, and that the indicated decomposition of ${\rm Spec}(L_K(E))$ is induced by this action.
\keywords{Leavitt path algebras \and  Dixmier-Moeglin equivalence \and  stratification \and affine algebraic group actions \and  prime spectrum \and  primitive spectrum.}
 \subclass{16G20 \and 14L30 \and 05E10}
\end{abstract}


\section{Introduction}\label{Introduction}
One of the important questions in the study of noncommutative algebras is to determine the structure of simple right modules.  In general, this is a very difficult problem.  Dixmier's approach to the classification problem for simple right modules of an algebra $A$ is to first determine the primitive ideals of $A$.  For each primitive ideal $P$ of $A$, one then attempts to classify the simple right $A$-modules whose annihilator is equal to $P$.  Consequently, understanding the primitive ideals of an algebra is of great importance and there is a long history of the study of primitive ideals and primitive rings.

Dixmier and Moeglin \cite{Dixmier}, \cite{Moeglin} gave a concrete characterization of the primitive ideals in an enveloping algebra $U(\mathcal{L})$ of a finite-dimensional complex Lie algebra $\mathcal{L}$.  They showed that for a prime ideal of $U(\mathcal{L})$, primitivity is equivalent to two other easy-to-recognize properties: being \emph{locally closed} and being \emph{rational}.

Given an algebra $A$, we say that a prime ideal $P\in {\rm Spec}(A)$ is \emph{locally closed} if it is an open subset of the closure of $\{P\}$ in the Zariski topology.  Equivalently, $P$ is locally closed if the intersection of all prime ideals in $A$ that properly contain $P$ strictly contains $P$.  In the case when ${\rm Spec}(A)$ satisfies the descending chain condition, this is equivalent to saying that $A/P$ has finitely many height one primes.

 When $A$ is noetherian and $P$ is a prime ideal of $A$, the ring $A/P$ is again noetherian and hence we can invert the nonzero regular elements of $A$ to form the Goldie ring of quotients, which we denote by $Q(A/P)$.  This is a simple Artinian ring and its center is a field.  We say that $P$ is \emph{rational} if the center $Z(Q(A/P))$ of $Q(A/P)$ is an algebraic extension of the base field of $A$.  On the other hand, when we are dealing with a non-noetherian algebra, we cannot guarantee that a prime homomorphic image will have a Goldie ring of quotients.  Several authors have noted that one can get around this obstruction by instead working with the Martindale ring of quotients, which we define in \S \ref{Mart}.  The Martindale ring of quotients of a prime algebra $A$ is a simple ring and its center is a field, which we call the \emph{extended centroid} of $A$.  Thus we can extend the notion of rational prime ideals to non-noetherian rings, by declaring that a prime ideal $P$ of an algebra $A$ is \emph{rational} if the extended centroid of $A/P$ is an algebraic extension of $K$.  This approach was used by Lorenz in extending the Dixmier-Moeglin equivalence to non-noetherian rings supporting a rational action of an affine algebraic group \cite{Lorenz1}, \cite{Lorenz2}.

\begin{defn} {\em Let $K$ be a field and let $A$ be a $K$-algebra.  We say that $A$ satisfies the \emph{Dixmier-Moeglin equivalence} on prime ideals if for $P\in {\rm Spec}(A)$ the following conditions are equivalent:
\begin{enumerate}
\item $P$ is primitive;
\item $P$ is rational;
\item $P$ is locally closed.
\end{enumerate}}
\end{defn}

Since the work of Dixmier and Moeglin on enveloping algebras, the Dixmier-Moeglin equivalence has been shown to hold for many classes of algebras, including algebras satsfying a polynomial identity, various quantum groups \cite{GL}, \cite{BG}, and many of the algebras coming from noncommutative projective geometry \cite{BRS}. The prime spectrum of algebras which satisfy the Dixmier-Moeglin equivalence are generally well behaved and often have a nice topological structure.  For example, in the case of many quantum groups, Goodearl and Letzter \cite{GL} have shown that the prime spectrum decomposes naturally into finitely many strata (which are constructed from a rational torus action), with each stratum homeomorphic to the scheme of irreducible subvarieties of a torus.  Lorenz  \cite{Lorenz1}, \cite{Lorenz2} has shown that this stratification property is a general phenomenon that one can attach to algebras supporting a rational affine algebraic group action and that the Dixmier-Moeglin equivalence is closely related to the number of strata being finite.

Given a row-finite directed graph $E$ and a field $K$, one can construct the \emph{Leavitt path algebra}, $L_K(E)$, which is an associative algebra over $K$.  (In the case that $E$ has an infinite set of vertices, $L_K(E)$ does not have a multiplicative identity.).  The original constructions due to Leavitt correspond to a very specific collection of graphs, but it has since been shown that a similar construction can be done for arbitrary directed graphs.  Leavitt path algebras are an algebraic analogue of a certain class of $C^*$-algebras (which properly belong to analysis), and they have received much recent attention.   We define Leavitt path algebras and give the necessary background for their study in \S \ref{LPA}.  Our main result is the following theorem.
\begin{thm} Let $E$ be a finite directed graph and let $K$ be a field.  Then the Dixmier-Moeglin equivalence holds for prime ideals of $L_K(E)$.
\label{thm: main}
\end{thm}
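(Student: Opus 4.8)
The plan is to reduce the statement to a concrete analysis of the prime quotients $A = L_K(E)/P$ and to organize $\mathrm{Spec}(L_K(E))$ by means of the natural $\mathbb{Z}$-grading on $L_K(E)$. First I would attach to each prime $P$ its \emph{graded core} $P^{\dagger}$, the largest graded ideal contained in $P$; since $E$ is finite, $L_K(E)$ is unital, the graded ideals are governed by the hereditary saturated subsets of vertices (together with finitely many breaking-vertex data), and $P^{\dagger}$ is again a graded prime. Grouping the primes according to their graded core partitions $\mathrm{Spec}(L_K(E))$ into finitely many strata, one over each graded prime $Q$. Writing $L_K(E)/Q = L_K(F)$ for the corresponding quotient graph $F$ — which is necessarily downward directed, as $Q$ is prime — the whole argument then turns on a single dichotomy: whether or not $F$ contains a cycle without exits. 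A preliminary lemma I would establish (or quote from the ideal theory of \S\ref{LPA}) is that a downward-directed finite graph has at most one cycle without exits, so this dichotomy is unambiguous.

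In the first case $F$ satisfies Condition (L) (every cycle has an exit). Here the stratum over $Q$ is the single point $\{Q\}$: there are no non-graded primes with graded core $Q$, because non-graded primes arise only from a surviving exit-free cycle. I would then verify the three conditions at $Q$ directly. Primitivity of $L_K(F)$ follows from the primitivity criterion for Leavitt path algebras (downward directed plus Condition (L), the countable separation property being automatic for finite graphs). Rationality follows from the computation that the extended centroid of $L_K(F)$ equals $K$ when $F$ has no exit-free cycle — this is where the Martindale machinery of \S\ref{Mart} enters. Local closedness I would obtain from a prime-avoidance argument: every prime strictly above $Q$ contains one of the finitely many minimal graded primes $Q_1,\dots,Q_r$ over $Q$, and since $Q$ is prime, $\bigcap_i Q_i \supsetneq Q$; hence the intersection of all primes properly containing $Q$ strictly contains $Q$. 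Thus all three conditions hold simultaneously, and this stratum is a copy of $\mathrm{Spec}(K)$.

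In the second case $F$ has a (unique) cycle $c$ without exits. Let $I_c$ be the ideal of $L_K(F)$ generated by $c$; the key structural input I would prove is that $I_c$ is an essential ideal isomorphic to $M_{\Gamma}(K[x,x^{-1}])$, where $\Gamma$ indexes the paths of $F$ ending on $c$, and that the non-graded primes with core $Q$ are exactly the ideals $P_p = \langle Q, p(c)\rangle$ for monic irreducible $p \neq x$, with $L_K(E)/P_p$ carrying the essential \emph{simple} minimal ideal $M_{\Gamma}(\mathbb{F})$, $\mathbb{F} = K[x,x^{-1}]/(p)$. This makes the stratum over $Q$ homeomorphic to $\mathrm{Spec}(K[x,x^{-1}])$, with generic point $Q$ and closed points the $P_p$. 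At each closed point $P_p$ the essential simple minimal ideal is a faithful simple module realizing primitivity, forces the extended centroid to equal the finite extension $\mathbb{F}$ of $K$ (rational), and, being contained in every nonzero prime, forces local closedness — so all three conditions hold. At the generic point $Q$ all three fail in tandem: Condition (L) is violated so $L_K(F)$ is not primitive, the extended centroid is $K(x)$ and hence transcendental over $K$, and $\bigcap_p P_p = Q$ (since $\bigcap_p (p) = 0$ in $K[x,x^{-1}]$), so $Q$ is not locally closed. Assembling the two cases yields both the promised decomposition of $\mathrm{Spec}(L_K(E))$ and the equivalence, since within every stratum the three conditions are either all satisfied (closed points) or all violated (the generic point of a Laurent stratum).

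The main obstacle is the structural and analytic input of the third paragraph: pinning down $L_K(E)/P$ for non-graded $P$ as an extension with an essential simple ideal of the form $M_{\Gamma}(\mathbb{F})$, and carrying out the extended-centroid computations via the Martindale ring of quotients, which is what the rationality clause really requires. Local closedness and primitivity are comparatively soft once this structure is in hand. As a conceptual check and an alternative route when $K$ is infinite, I would exhibit the rational $K^{\times}$-action induced by the grading; its orbit-closure stratification coincides with the graded-core stratification above, and the Lorenz stratification theory then packages the equivalence, with the finiteness of the number of strata reflecting the finiteness of $E$. I would nonetheless run the direct argument as the primary proof, so that the case of finite $K$ is covered uniformly.
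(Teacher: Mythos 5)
Your stratification by graded cores is exactly the paper's (Theorem \ref{strat}), and two of your three clauses go through essentially as you describe. Your prime-avoidance argument for local closedness (every prime strictly above $Q$ contains one of the finitely many graded primes strictly above $Q$, whose finite product cannot lie in the prime $Q$) is a legitimate variant of the paper's argument via Corollary \ref{containsverticescorollary}, and your treatment of the non-graded primes and of the generic point of a Laurent stratum is a clean alternative to the paper's hands-on work, \emph{provided} you import from the literature the structure theorem that the ideal generated by an exit-free cycle in a prime quotient is an essential ideal isomorphic to $M_{\Gamma}(K[x,x^{-1}])$, together with the fact that the extended centroid of a prime ring coincides with that of any of its nonzero ideals. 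With those inputs, your computation $\extcent{L_K(F)} \cong K(x)$ at a generic Laurent point replaces the paper's explicit construction of a transcendental element (the bimodule map $avb\mapsto acb$ on $L_K(M)vL_K(M)$, whose well-definedness costs the paper a full page), and your socle argument at the closed points replaces the paper's appeal to Lanski--Resco--Small.

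The genuine gap is in your first case, the graded primitive ideals: you assert that ``the extended centroid of $L_K(F)$ equals $K$ when $F$ has no exit-free cycle'' and let rationality ``follow from the computation,'' but you give no method for that computation, and it is precisely the content of primitive $\Rightarrow$ rational for graded primes. It does not follow from primitivity (the field $K(x)$ is a primitive $K$-algebra whose extended centroid is transcendental over $K$), and it does not follow from the socle technique you use in the Laurent strata, because a Condition-(L) prime quotient can have zero socle (take two vertices, two loops at each, and one connecting edge: prime, Condition (L), no line points, hence $\soc = 0$), so there is no essential simple minimal ideal to anchor a Martindale calculation. The assertion is in fact true, but it is a substantive theorem about Leavitt path algebras, not a formality, and nothing in your outline produces it; you even identify the third paragraph (the non-graded case) as the main obstacle, when that case is the tractable one. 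The paper is engineered to avoid exactly this point: it never computes $\extcent{L_K(F)}$ for Condition-(L) quotients. Instead it base-changes to an uncountable purely transcendental extension $K'$ (primitivity of $P'=P\otimes_K K'$ is preserved because maximality in the stratum is, by Theorem \ref{strat}), applies the cardinality bound $\dim_{K'} R' < \# K'$ of Proposition \ref{algebraicbydimensionprop} to conclude that $\extcent{R'}$ is \emph{algebraic} over $K'$, and then descends algebraicity along the embedding $\extcent{R}\otimes_K K'\to \extcent{R'}$. Unless you either reproduce that big-field argument or supply an actual proof of your extended-centroid claim, your proposal proves only the equivalence of primitivity and local closedness, plus one direction of rationality.
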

A major tool in our study is the work of Aranda Pino, Pardo, and Siles Molina \cite{PPSM}, who studied the prime and primitive spectrum of $L_K(E)$.  We show that their results can be recast in the language of stratification.  In particular, if $K$ is infinite, we show that a Leavitt path algebra $L_K(E)$ supports a rational $K^{\times}$-action as $K$-algebra automorphisms.  In the case that $E$ is finite, this action partitions the prime spectrum of $L_K(E)$ into finitely many strata with each stratum homeomorphic to either ${\rm Spec}(K)$ or ${\rm Spec}(K[x,x^{-1}])$; furthermore, the primitive ideals are precisely those ideals that are maximal in their stratum (see Theorem \ref{strat} and the discussion afterwards for a precise statement).    We use this stratification result to prove that the Dixmier-Moeglin equivalence holds (over any base field) for the Leavitt path algebra of any finite directed graph.

The outline of this paper is as follows.  In \S \ref{LPA}, we give background on Leavitt path algebras and relevant terminology.  In \S \ref{Mart}, we recall the construction of the Martindale ring of quotients of a prime algebra.   In \S \ref{Stratification}, we describe Goodearl and Letzter's partition of the prime spectrum of algebras supporting a rational affine algebraic group action and show that these results apply to Leavitt path algebras.  In \S \ref{DM}, we prove Theorem \ref{thm: main}.


\section{Leavitt path algebras}
\label{LPA}
In this section, we quickly recall the construction of a Leavitt path algebra.
A more complete description can be found in
\cite{AA1}. A \emph{directed graph}
$E=(E^0,E^1,r_E,s_E)$ consists of two sets $E^0,E^1$ and maps $r_E,s_E:E^1
\to E^0$.  The elements of $E^0$ are called \emph{vertices} and the
elements of $E^1$ \emph{edges}. We write $s$ for $s_E$ (resp., $r$ for $r_E$) if the graph $E$ is clear from context.  We emphasize that loops and
parallel edges are allowed.

If $s^{-1}(v)$ is a finite set for
every $v\in E^0$, then the graph is called \emph{row-finite}.  If both $E^0$ and $E^1$ are finite sets we call $E$ \emph{finite}. (All
graphs in this paper will be assumed to be row-finite. Although our main result will apply only to finite graphs, we present a number of the lead-up results in this more general setting.)  A vertex
$v$ for which $s^{-1}(v)$ is empty is called a \emph{sink};  a
vertex $w$ for which $r^{-1}(w)$ is empty is called a \emph{source}.

A \emph{path} $\mu$ in a graph $E$ is a sequence of edges
$\mu=e_1\dots e_n$ such that $r(e_i)=s(e_{i+1})$ for
$i=1,\dots,n-1$. In this case, $s(\mu):=s(e_1)$ is the \emph{source}
of $\mu$, $r(\mu):=r(e_n)$ is the \emph{range} of $\mu$, and $n$ is
the \emph{length} of $\mu$.  An edge $f$ is an {\it exit} for a path
$\mu = e_1 \dots e_n$ if there exists $i$ such that $s(f)=s(e_i)$
and $f \neq e_i$. If $\mu$ is a path in $E$, and if
$v=s(\mu)=r(\mu)$, then $\mu$ is called a \emph{closed path based at
$v$}. If $\mu= e_1 \dots e_n$ is a closed path based at $v = s(\mu)$
and $s(e_i)\neq s(e_j)$ for every $i\neq j$, then $\mu$ is called a
\emph{cycle}.  We define a relation $\ge$ on $E^0$ by declaring that $v\ge w$ in case $v=w$, or in case there is a path $\mu$ with $s(\mu) =v$ and $r(\mu)=w$.    A subset $H\subseteq E^0$ is called \emph{hereditary} if
$$v\in H \ {\rm and} \ v\geq w\Rightarrow w\in H.$$
A hereditary set $H$ is called \emph{saturated} if
$$s^{-1}(v)\neq \emptyset~{\rm and}~r(s^{-1}(v))\subseteq H\Rightarrow v\in H.$$
In words, $H$ is saturated in case whenever $v$ is a vertex having the property that all of the vertices to which the edges emanating from $v$ point are in $H$, then $v$ is in $H$ as well.

We say the graph $E$ {\it satisfies Condition (L)} in case every cycle in $E$ has an exit.

Our focus in this article is on $L_K(E)$, the Leavitt path algebra
of $E$. We define $L_K(E)$ and give a
few examples.

\begin{defn}\label{definition}  {\rm Let $E$ be any row-finite graph, and $K$ any field.
The {\em Leavitt path $K$-algebra} $L_K(E)$ {\em of $E$ with coefficients in $K$} is
the $K$-algebra generated by a set $\{v \mid v\in E^0\}$ of pairwise orthogonal idempotents,
together with a set of variables $\{e,e^* \mid e \in E^1 \}$, which satisfy the following
relations:

(1) $s(e)e=er(e)=e$ for all $e\in E^1$.

(2) $r(e)e^*=e^*s(e)=e^*$ for all $e\in E^1$.

(3) (The ``CK1 relations") \ $e^*e'=\delta _{e,e'}r(e)$ for all $e,e'\in E^1$.

(4) (The ``CK2 relations") \ $v=\sum _{\{ e\in E^1\mid s(e)=v \}}ee^*$ for every vertex
$v\in E^0$ for which $s^{-1}(v)$ is
nonempty.
}
\end{defn}

We will sometimes denote $L_K(E)$ simply by $L(E)$ for notational convenience.  The set
$\{e^*\mid e\in E^1\}$ will be denoted by $(E^1)^*$. We let $r(e^*)$
denote $s(e)$, and we let $s(e^*)$ denote $r(e)$. If $\mu = e_1
\cdots e_n$ is a path, then we denote by $\mu^*$ the element $e_n^*
\cdots e_1^*$ of $L_K(E)$.

An alternate description of $L_K(E)$ is given by the first author and Aranda Pino \cite{AA1}, where
it is described in terms of a free associative algebra modulo the
appropriate relations indicated in Definition \ref{definition}
above. As a consequence, if $A$ is any $K$-algebra which contains a
set of elements satisfying these same relations (we call such a set
an $E$-\emph{family}), then there is a (unique) $K$-algebra
homomorphism from $L_K(E)$ to $A$ mapping the generators of $L_K(E)$
to their appropriate counterparts in $A$.

Many well-known algebras arise as the Leavitt path algebra of a
row-finite graph \cite[Examples 1.4]{AA1}.
For example, the classical Leavitt $K$-algebras $L_n$ for $n\ge 2$ arise as the
algebras $L_K(R_n)$ where $R_n$ is the ``rose with $n$ petals" graph (see Figure $1$).
\begin{figure}
$$\xymatrix{ & {\bullet^v} \ar@(ur,dr) ^{e_1} \ar@(u,r) ^{e_2}
\ar@(ul,ur) ^{e_3} \ar@{.} @(l,u) \ar@{.} @(dr,dl) \ar@(r,d) ^{e_n}
\ar@{}[l] ^{\ldots} }$$
\caption{The rose with $n$ petals.}
\end{figure}
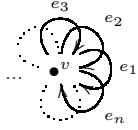 The full $n\times n$ matrix algebra over $K$
arises as the Leavitt path algebra of the oriented $n$-line graph (see Figure $2$).
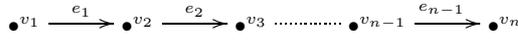
\begin{figure}
$$\xymatrix{{\bullet}^{v_1} \ar [r] ^{e_1} & {\bullet}^{v_2} \ar [r]
^{e_2} & {\bullet}^{v_3} \ar@{.}[r] & {\bullet}^{v_{n-1}} \ar [r]
^{e_{n-1}} & {\bullet}^{v_n}} $$

\caption{The oriented $n$-line graph.}
\end{figure}
The Laurent polynomial
algebra $K[x,x^{-1}]$ arises as the Leavitt path algebra of the
``one vertex, one loop" graph (see Figure $3$).
\begin{figure}
$$\xymatrix{{\bullet}^{v} \ar@(ur,dr) ^x}$$
\caption{The ``one vertex, one loop'' graph.}
\end{figure}
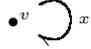

 Given a directed graph $E=(E^0,E^1)$ and a hereditary subset $H$ of $E^0$ we create the \emph{quotient graph}, denoted $E/H$, whose vertex set is given by $(E/H)^0:=E^0\setminus H$ and whose
edge set, $(E/H)^1$, consists of all edges $e\in E^1$ such that $r(e)\not\in H$.  (Note that since $H$ is hereditary, if $e\in (E/H)^1$ then $s(e)\not\in H$.)
 It is easy to show that if $K$ is a field, $E$ is a row-finite directed graph, and $I$ is an ideal of $L_K(E)$, then $E^0\cap I$ is a hereditary saturated subset of $E^0$.   Conversely, if $H$ is a hereditary saturated subset of $E^0$ and $I = I(H)$ is the ideal of $L_K(E)$ generated by $H$, then $I\cap H = H$.  Indeed, in this situation we have $L_K(E)/I(H) \cong L_K(E/H)$.

A property of Leavitt path algebras, of which we will make use on more than one occasion, bears mentioning here.

\begin{prop}\label{structureofvL(E)v} (\cite[Lemma 1.5]{ABGM})   Let $E$ be an arbitrary graph, and $c$ a cycle based at $v$ for which $c$ has no exits in $E$.   Then $vL_K(E)v \cong K[x,x^{-1}]$ as $K$-algebras, via an isomorphism $\varphi$ for which $\varphi(c) = x$.
\end{prop}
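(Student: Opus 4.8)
The plan is to exploit the no-exit hypothesis to pin down a completely rigid local structure around the cycle, and then to build $\varphi$ out of the invertibility of $c$ in the corner ring $vL_K(E)v$. Write $c = e_1 e_2 \cdots e_n$ and set $v_i := s(e_i)$, so that $v = v_1, v_2, \ldots, v_n$ are distinct vertices with $r(e_i) = v_{i+1}$ (indices mod $n$, $v_{n+1} = v_1 = v$). The first observation is that, because $c$ has no exit, every edge emanating from a cycle vertex lies on the cycle, i.e. $s^{-1}(v_i) = \{e_i\}$ for each $i$. Applying the CK2 relation at $v_i$ then gives $v_i = e_i e_i^*$. From this, a short telescoping computation using the CK1 relations $e_i^* e_i = r(e_i) = v_{i+1}$ together with the idempotent relations shows $c^* c = v$ and $c c^* = v$. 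Hence $c$ is a unit in the corner algebra $vL_K(E)v$, whose identity element is $v$, with inverse $c^*$. Consequently there is a well-defined $K$-algebra homomorphism $\varphi \colon K[x,x^{-1}] \to vL_K(E)v$ determined by $\varphi(x) = c$ (and forced to satisfy $\varphi(x^{-1}) = c^*$, $\varphi(1) = v$); it remains to prove that $\varphi$ is bijective.

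For surjectivity I would use the standard fact that $L_K(E)$ is spanned over $K$ by the monomials $\mu\nu^*$, where $\mu,\nu$ are paths with $r(\mu)=r(\nu)$. Compressing by $v$ on both sides annihilates every such monomial except those with $s(\mu)=s(\nu)=v$, so $vL_K(E)v$ is spanned by $\{\mu\nu^* : s(\mu)=s(\nu)=v,\ r(\mu)=r(\nu)\}$. The crucial point, again from the no-exit hypothesis, is that there is exactly one path of each length starting at $v$: from $v_i$ one is forced to take $e_i$, so the unique path $\mu_\ell$ of length $\ell$ is an initial segment of a power of $c$, and $r(\mu_\ell)$ depends only on $\ell \bmod n$. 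Thus $\mu_\ell\mu_m^*$ vanishes unless $\ell \equiv m \pmod n$, and in the surviving cases a telescoping computation (collapsing $\mu_m^*\mu_m = r(\mu_m)$ and $e_i e_i^* = v_i$) yields $\mu_\ell \mu_m^* = c^{(\ell-m)/n}$, read as $v$ when $\ell = m$ and as a power of $c^*$ when $\ell < m$ (the latter case following from the former via the standard involution $\ast$). Hence every spanning monomial is a power of $c$, so $\varphi$ is onto.

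For injectivity I would invoke the canonical $\Z$-grading on $L_K(E)$ with $\deg(v_i) = 0$, $\deg(e) = 1$, $\deg(e^*) = -1$. Since $c^k$ is homogeneous of degree $kn$ for every $k \in \Z$, the spanning result shows that the degree-$kn$ component of $vL_K(E)v$ is spanned by $c^k$ and that all other homogeneous components vanish. Each $c^k$ is a unit of $vL_K(E)v$, being a power of the unit $c$, and hence nonzero because $v \neq 0$ (the standard fact that vertices are nonzero idempotents in $L_K(E)$). Therefore $\{c^k : k \in \Z\}$ is a $K$-basis of $vL_K(E)v$, and $\varphi$ carries the basis $\{x^k\}$ of $K[x,x^{-1}]$ bijectively onto it; this gives injectivity and completes the proof that $\varphi$ is an isomorphism with $\varphi(c) = x^{-1}$'s inverse, i.e. $\varphi(x)=c$ as required.

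I expect the main obstacle to be the surjectivity step, specifically verifying that the products $\mu_\ell\mu_m^*$ genuinely collapse to $c^{(\ell-m)/n}$. This is exactly where the no-exit hypothesis is indispensable: it both pins down the paths leaving $v$ and supplies the relations $e_i e_i^* = v_i$, and the bookkeeping of telescoping the CK1 and CK2 relations is the one honestly computational part of the argument. Everything else is formal once $c$ is known to be invertible in the corner ring.
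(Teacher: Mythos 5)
Your proposal is correct, but note that the paper itself contains no proof of this proposition: it is quoted verbatim from \cite[Lemma 1.5]{ABGM}, so what you have done is supply the self-contained argument that the paper outsources to that reference. Your three steps are sound. (i) The no-exit hypothesis forces $s^{-1}(v_i)=\{e_i\}$, so the CK2 relation at $v_i$ collapses to $e_ie_i^*=v_i$, and the two telescoping computations give $c^*c=cc^*=v$; hence $c$ is a unit of the corner ring $vL_K(E)v$, whose identity is $v$, and $x\mapsto c$ defines a homomorphism from $K[x,x^{-1}]$. (ii) Compressing the standard spanning set $\{\mu\nu^*\}$ by $v$ and using the uniqueness of the path of each length leaving $v$ does show the corner is spanned by $\{c^k : k\in\Z\}$; for the key collapse $\mu_\ell\mu_m^*=c^{(\ell-m)/n}$ (for $\ell\ge m$, $\ell\equiv m \bmod n$) the cleanest bookkeeping is to write $\mu_\ell=c^{(\ell-m)/n}\mu_m$ and use $\mu_m\mu_m^*=v$, which follows from the same telescoping of $e_ie_i^*=v_i$; the case $\ell<m$ then follows by applying the involution, as you say. (iii) Since $c^k$ is homogeneous of degree $kn$ and is a unit of a nonzero ring (vertices are nonzero idempotents), the $\Z$-grading of $L_K(E)$ — which the paper records explicitly — forces the powers of $c$ to be linearly independent, so $\{c^k\}$ is a basis and the homomorphism is bijective. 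Two cosmetic points only: your map goes from $K[x,x^{-1}]$ into $vL_K(E)v$, whereas the statement's $\varphi$ goes the other way, so you should finish by taking the inverse of your isomorphism, which indeed sends $c\mapsto x$; and your closing sentence garbles this ("$\varphi(c)=x^{-1}$'s inverse"). A further small virtue of your argument: it uses CK2 only at the vertices $v_i$, each of which emits exactly one edge, so it is valid for arbitrary (not necessarily row-finite) graphs, exactly as the statement requires.
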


\section{The Martindale ring of quotients}
\label{Mart}
In this section, we recall Martindale's construction of the ring of quotients of a prime ring.  We include the full definition of Martindale's ring of quotients, but we refer to other sources \cite{Lam}, \cite{Rowen}, \cite{BMM}  for the proof that this construction actually yields a ring with the stated properties.  We note that this construction does not require that the ring have a multiplicative identity.

Let $A$ be a prime ring and consider the set of all right $A$-module homomorphisms $f: I \ra A$, where $I$ ranges over all nonzero two-sided ideals of $A$.  Martindale \cite{Martindale1} shows that one can endow this set of maps with a useful algebraic structure.

\begin{defn}\label{def:martindale-ring-of-quotients}{\em
Let $K$ be a field and let $A$ be a prime $K$-algebra.  The \emph{(right) Martindale ring of quotients of $A$}, denoted $\mart{A}$, consists of equivalence classes of pairs $(I, f)$ where $I \ideal A$, $I\neq (0)$, and $f\in {\rm Hom}_{A}(I_{A}, A_{A})$.  Here two pairs $(I,f)$, $(J, g)$ are defined to be equivalent if $f = g$ on the intersection $I\cap J$.  Addition and multiplication are given by
$$(I, f) + (J, g) \ = \ (I\cap J, f+g), \ \ \ \ \
(I,f)\cdot(J,g) \ = \ (JI, f\circ g).$$
}
\end{defn}

\begin{defn}\label{extendedcentroiddef}
Let $K$ be a field and let $A$ be a prime $K$-algebra.  The \emph{extended centroid of $A$}, written $\extcent{A}$, is defined to be $Z(\mart{A})$.
\end{defn}
We note that if $A$ is a prime algebra then $A$ embeds in $\mart{A}$ via the map
$a\mapsto [(A,f_a)]$, where $[(A,f_a)]$ denotes the equivalence class of the map $f_a:A\ra A$  defined by $f_a(x)=ax$.

\begin{rem}
If $A$ is an algebra over a base field $K$ then $\extcent{A}$ is a field extension of $K$ and $\extcent{A}\cap A = Z(A)$.
\end{rem}
There is an entirely internal characterization of $\extcent{A}$ which bears mentioning: the extended centroid of $A$ consists precisely of the equivalence classes $[(I, f)]$ where $f\colon I \ra A$ is an $(A,A)$-bimodule homomorphism:
\[
\extcent{A} = \setmid{(I,f)}{(0)\neq I\ideal A, f\in {\rm Hom}_{A}(_{A}I_{A}, _{A}A_{A})}/\sim,
\]
where $\sim$ is the equivalence described above.  (See \cite[Theorem 2.3.2]{BMM}.)

We note that this shows that the extended centroid we obtain from using the right Martindale ring of quotients is the same as that of the left Martindale ring of quotients.

  This observation allows one to extend the notion of rationality to prime ideals in non-noetherian rings (see Lorenz \cite{Lorenz1}, \cite{Lorenz2} for a discussion).
\begin{defn} {\em Let $K$ be a field and let $A$ be a $K$-algebra.  We say that a prime ideal $P$ of $A$
is \emph{rational} if $\extcent{A/P}$ is an algebraic extension of $K$.}
\end{defn}

\section{Stratification}
\label{Stratification}
In this section, we look at recent work of Aranda Pino, Pardo, and Siles Molina \cite{PPSM} on the prime spectrum of Leavitt path algebras and explain how it relates to the stratification theory of Goodearl and Letzter \cite{GL}.

The authors of \cite{PPSM} gave a remarkable correspondence between the prime spectrum of a Leavitt path algebra and a relatively simple set built from the underlying graph and a Laurent polynomial ring.  To construct this set, we quickly recall a few basic definitions.
\begin{defn}{\em Let $E=(E^0,E^1)$ be a directed graph.  We say that a nonempty subset $M\subseteq E^0$ is a \emph{maximal tail} if it satisfies the following properties:
\begin{enumerate}
\item If $v\in E^0$, $w\in M$ and $v\ge w$ then $v\in M$;
\item if $v\in M$ and $s^{-1}(v )\neq \emptyset$, then there is
$e\in E^1$ such that $s(e)=v$ and $r(e)\in M$;
\item if $v,w\in M$ then there is $y\in M$ such that $v\ge y$ and $w\ge y$.
\end{enumerate}
We let $\mathcal{M}(E)$ denote the set of maximal tails of $E$.}
\end{defn}

See Example \ref{tailsexample} below for some specific computations of maximal tails.



\begin{defn}
{\em We let $\mathcal{M}_{\gamma}(E)$ denote the subset of $\mathcal{M}(E)$ consisting of those maximal tails $M$ for which every cycle in $M$ has an exit in $M$.  We  denote by $\mathcal{M}_{\tau}(E)$ the set $ \mathcal{M}(E) \setminus \mathcal{M}_{\gamma}(E)$.   Rephrased, $M\in \mathcal{M}(E)$ has $M \in \mathcal{M}_{\gamma}(E)$ precisely when the graph $M$ satisfies Condition (L).}
\end{defn}
We note that by Property (3) of maximal tails, if $M \in \mathcal{M}_\tau$ then there is a {\it unique} (up to cyclic permutation) cycle $c$ in $M$ which has no exit.

Consistent with the remarks made at the end of the Section \ref{LPA}, if $M$ is a maximal tail of $E^0$, we will let (as an abuse of notation) $L_K(M)$ denote the Leavitt path algebra of the quotient graph $E/(E^0\setminus M)$.  Specifically,

\begin{rem}
Let $K$ be a field and let $E$ be a row-finite directed graph.  If $P$ is a graded prime ideal of $L_K(E)$, then we can identify the corresponding maximal tail $M:=E^0\setminus (E^0\cap P)$ of $P$ with the quotient graph $E/(E^0\cap P)$, and thus $L_K(E)/P\cong L_K(M)$.
\label{rem: 1}
\end{rem}

 We now describe the correspondence given by Aranda Pino, Pardo, and Siles Molina.  We note that $L_K(E)$ is a $\mathbb{Z}$-graded $K$-algebra, in which elements of $E^0, E^1$, and $(E^1)^*$ are given weights $0$, $1$, and $-1$, respectively.  Thus we have a distinguished subset of the prime spectrum of a Leavitt path algebra, consisting of the graded prime ideals. (Because $\Z$ is an ordered group, the set ${\rm gr}\mbox{-}{\rm Spec}(L_K(E))$ of  prime ideals of $L_K(E)$ which are graded equals the set of those graded ideals which are prime with respect to graded ideals; see e.g. \cite[Proposition II.1.4]{NO}.)  There is a bijective correspondence \cite[\S 3]{PPSM} between the set of graded prime ideals of $L_K(E)$ and maximal tails of $E$, given by
\begin{equation}
P\in {\rm gr}\mbox{-}{\rm Spec}(L_K(E)) \mapsto E^0\setminus \left( P\cap E^0\right).
\label{eq: 1}
\end{equation}
The inverse of this map is given by
\begin{equation}
M\in \mathcal{M}(E) \mapsto \sum_{v \in E^0 \setminus M} L_K(E)vL_K(E).
\end{equation}
We define
\begin{equation} {\rm Spec}_{\tau}(L_K(E)):={\rm Spec}(L_K(E))\setminus {\rm gr}\mbox{-}{\rm Spec}(L_K(E));
\end{equation}
that is, ${\rm Spec}_{\tau}(L_K(E))$ is the set of non-graded prime ideals of $L_K(E)$. Then Aranda Pino et al. \cite{PPSM} show there is an explicit bijective correspondence
\begin{equation}\label{eq: 2}
{\rm Spec}_{\tau}(L_K(E)) \to \mathcal{M}_{\tau}(E)\times  {\rm M}\mbox{-}{\rm Spec}(K[x,x^{-1}]),
\end{equation}
where M-Spec$(R)$ denotes the collection of maximal ideals of a ring $R$.

To describe this correspondence, we note that if $P$ is a non-graded prime ideal of $L_K(E)$, then by \cite[Lemma 2.7]{PPSM}  $M:=E^0\setminus (P\cap E^0)$ is in $\mathcal{M}_{\tau}(E)$.  Furthermore, if $M\in
\mathcal{M}_{\tau}(E)$ then there is a unique cycle (up to cyclic permutation) $c=c(M)$ in $M$ that does not have an exit.  Let $v$ be the vertex in $E^0$ at which $c$ is based; i.e., $v=s(c)=r(c)$.  Given a polynomial $f(x)=\sum_i \alpha_i x^i \in K[x,x^{-1}]$, we define \begin{equation}
f(c):=\sum_i \alpha_i c^i,
\end{equation} where we take $c^0$ to be $v$ and we take $c^{-1}$ to be $c^*$.
If $P$ is ungraded then there is some vertex $v\not\in P\cap E^0$, a cycle $c$ based at $v$ that does not have an exit in $E^0 \setminus P\cap E^0$, and an irreducible (so, nonzero) polynomial $f_P(x)\in K[x,x^{-1}]$ such that $f_P(c)\in P$.  (We note that $f_P(x)$ is unique up to multiplication by units and $c$ is unique up to cyclic permutation.)  The correspondence in Equation (\ref{eq: 2}) is given by
\begin{equation}
P\in {\rm Spec}_{\tau}(L_K(E)) \mapsto \left( E^0\setminus \left( P\cap E^0\right), (f_P(x))\right).
\end{equation}
and
\begin{equation}
\begin{array}{rl}
 (M, (f(x))\,) \in & \mathcal{M}_{\tau}(E)\times  {\rm M}\mbox{-}{\rm Spec}(K[x,x^{-1}])  ~\\
 \smallskip
\mapsto &   \sum_{v \in E^0 \setminus M} L_K(E)vL_K(E) + L_K(E) f(c(M)) L_K(E).
\end{array}
\end{equation}

Here now is the key description of the primitive ideals of $L_K(E)$.

\begin{prop}\label{descriptionofprimitives}  Let $E$ be a row-finite graph, and $K$ any field.

(i)  Let $P \in {\rm gr}\mbox{-}{\rm Spec}(L_K(E))$, and let $M = E^0 \setminus P\cap E^0$.  Then $P$ is primitive if and only if $M \in \mathcal{M}_{\gamma}(E)$.

(ii) Every $P \in {\rm Spec}_{\tau}(L_K(E))$ is primitive.

\end{prop}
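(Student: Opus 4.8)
The plan is to prove both parts by analyzing when the relevant prime quotient $L_K(E)/P$ admits a simple faithful module, using the correspondence with maximal tails and Remark \ref{rem: 1}. For a graded prime $P$ with associated maximal tail $M$, Remark \ref{rem: 1} gives $L_K(E)/P \cong L_K(M)$, so part (i) reduces to showing that $L_K(M)$ is (left) primitive if and only if $M \in \mathcal{M}_\gamma(E)$, i.e. precisely when the graph $M$ satisfies Condition (L). I would first recall the known characterization (from the work on primitivity of Leavitt path algebras, e.g. in the Aranda Pino--Pardo--Siles Molina circle of results) that a Leavitt path algebra $L_K(F)$ of a row-finite graph $F$ is primitive if and only if $F$ satisfies Condition (L) together with the downward-directedness condition on vertices. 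The key point is that a maximal tail, by its defining Properties (1) and (3), is exactly a hereditary subset that is downward directed; so the only remaining condition for primitivity of $L_K(M)$ is that $M$ satisfy Condition (L), which by definition is exactly the statement $M \in \mathcal{M}_\gamma(E)$. This gives part (i) once the primitivity criterion is in hand.

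For part (ii), let $P \in {\rm Spec}_\tau(L_K(E))$ be non-graded, with associated data $(M, (f_P(x)))$ under the correspondence in Equation (\ref{eq: 2}), where $M \in \mathcal{M}_\tau(E)$ has a unique exit-free cycle $c = c(M)$ based at a vertex $v$, and $f_P(x) \in K[x,x^{-1}]$ is irreducible. The strategy is to pass to the graded prime $P_0$ corresponding to $M$ (the graded ideal generated by $E^0\setminus M$), so that $P_0 \subseteq P$ and $L_K(E)/P_0 \cong L_K(M)$; then $P/P_0$ corresponds to the ideal of $L_K(M)$ generated by $f_P(c)$. Since $M \in \mathcal{M}_\tau$, the cycle $c$ has no exit in $M$, so by Proposition \ref{structureofvL(E)v} the corner $v L_K(M) v \cong K[x,x^{-1}]$ with $\varphi(c) = x$. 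Under this isomorphism the maximal ideal $(f_P(x))$ of $K[x,x^{-1}]$ corresponds to a maximal ideal of the corner, and the quotient $v L_K(M) v / (f_P(c)) \cong K[x,x^{-1}]/(f_P(x))$ is a finite field extension of $K$, hence a simple ring.

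The heart of the argument is to promote this simple corner quotient to the full quotient ring $L_K(E)/P$. I would argue that $L_K(E)/P$ is a simple Artinian-like ring — more precisely, that it is isomorphic to a matrix ring over the field $K[x,x^{-1}]/(f_P(x))$, or at least a simple ring — by showing the (nonzero graded) ideal generated by the corner idempotent $\bar v$ is everything and that the corner quotient is simple, so that $\bar v \, L_K(E)/P \, \bar v$ being a field forces $L_K(E)/P$ to be simple by a Morita/corner argument. Once $L_K(E)/P$ is simple (as a ring, with $\bar v$ a full idempotent whose corner is a field), it is automatically primitive, since any simple ring with identity (or with a full idempotent) acts faithfully and irreducibly on a suitable minimal one-sided ideal.

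The step I expect to be the main obstacle is the transition from ``the corner $\bar v (L_K(E)/P) \bar v$ is a field'' to ``$L_K(E)/P$ is simple, hence primitive.'' This requires controlling the ideal structure of the quotient and verifying that $\bar v$ is a full idempotent, i.e. that the two-sided ideal it generates is all of $L_K(E)/P$; this is where the downward-directedness of $M$ (Property (3) of maximal tails) and the absence of further graded ideals strictly between $P_0$ and any candidate ideal must be used carefully. I would likely invoke the explicit ideal-theoretic description of $L_K(M)$ coming from the lattice of hereditary saturated subsets together with the cycle data, showing that the only ideals of $L_K(E)/P_0 \cong L_K(M)$ containing $f_P(c)$ are $(0)$ and the whole ring, which is exactly the maximality of $(f_P(x))$ combined with simplicity of the cycle's Laurent-polynomial corner. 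Modulo this ideal-lattice bookkeeping, primitivity of every $P \in {\rm Spec}_\tau(L_K(E))$ follows.
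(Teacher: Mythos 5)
Your treatment of part (i) is fine and is essentially the paper's argument: reduce via Remark \ref{rem: 1} to $L_K(M)$ and invoke the Aranda Pino--Pardo--Siles Molina primitivity criterion (Condition (L) plus downward directedness, the latter automatic for a maximal tail by Property (3)). The problem is in part (ii), precisely at the step you yourself flag as ``the main obstacle'': the statements you propose to verify there are simply false in general. For a non-graded prime $P$, the quotient $L_K(E)/P$ need \emph{not} be simple, the idempotent $\bar v$ need \emph{not} be full, and the ideal of $L_K(M)$ generated by $f_P(c)$ need \emph{not} be maximal. A concrete counterexample is the paper's own family $E_{n,S}$ with $n=2$ and $S=\emptyset$: let $E$ have vertices $v_1,v_2$, loops $f_1$ at $v_1$ and $f_2$ at $v_2$, and one edge $e$ with $s(e)=v_2$, $r(e)=v_1$. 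Then $M=E^0$ is a maximal tail in $\mathcal{M}_{\tau}(E)$ with exit-free cycle $c=f_1$, and $P=L_K(E)(f_1-v_1)L_K(E)$ is a non-graded prime in the stratum of $M$ (here $P_0=(0)$, so $L_K(M)=L_K(E)$). Since $f_1=f_1v_1$ and $v_1$ both lie in the graded ideal $I(\{v_1\})$, and $\{v_1\}$ is hereditary and saturated, we get $P\subsetneq I(\{v_1\})\subsetneq L_K(E)$. Hence $L_K(E)/P$ is prime and (as the paper shows) primitive, but it is not simple; the ideal generated by $f_P(c)$ is not maximal; and the two-sided ideal generated by $\bar v_1$ in $L_K(E)/P$ is $I(\{v_1\})/P$, which is proper, so $\bar v_1$ is not a full idempotent. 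Any Morita/corner argument predicated on fullness or on simplicity of the quotient therefore collapses, and no amount of ``ideal-lattice bookkeeping'' can repair it, because the conclusion being bookkept toward is wrong.

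What rescues the argument --- and is exactly what the paper does --- is that primitivity, unlike simplicity, passes from a corner to the whole ring in the prime setting \emph{without} any fullness hypothesis: by the theorem of Lanski, Resco, and Small \cite{LRS}, a prime ring $R$ with a nonzero idempotent $u$ is primitive if and only if $uRu$ is primitive. Your computation that $\bar v\,(L_K(E)/P)\,\bar v\cong K[x,x^{-1}]/(f_P(x))$ is a field (hence primitive) is correct and coincides with the paper's first step; combining it with \cite{LRS} immediately yields that $P$ is primitive, finishing part (ii).
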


\begin{proof}
(i) \  Let $H = P\cap E^0$.  Since $P$ is graded we have $P = I(H)$.  But as noted above, $L_K(E)/P = L_K(E)/I(H) \cong L_K(E / H) = L_K(M)$.  If $M \in \mathcal{M}_{\gamma}(E)$ then $M$ has Condition (L), so that $L_K(M)$ (and hence $P$) is primitive by \cite[Theorem 4.6]{PPSM}.  On the other hand, if $M\notin \mathcal{M}_{\gamma}(E)$ then $P$ is not primitive by \cite[Lemma 4.1]{PPSM}.

(ii) \   By the description given above, we have
$$P = \sum_{v\in E^0 \setminus  M} L_K(E)vL_K(E) + L_K(E) f(c(M)) L_K(E)$$ for some irreducible $f(x) \in K[x,x^{-1}]$, where $c(M)$ is a cycle without exits  based at the vertex $w = s(c)$.    Let $u$ denote the nonzero idempotent $w + P$ in the prime ring $L_K(E) / P$, and let $\varphi: wL_K(E)w \rightarrow K[x,x^{-1}]$ denote the isomorphism described in Proposition \ref{structureofvL(E)v}.  Then the quotient map $\overline{\varphi^{-1}}: K[x,x^{-1}] \rightarrow  (wL_K(E)w + P)/P = u(L_K(E)/P)u $ is surjective.    But the description of $P$ yields that ${\rm Ker}(\overline{\varphi^{-1}})\supseteq (f(x))$ for the irreducible polynomial $f(x)$, and since $\overline{\varphi^{-1}}$ is not the zero map, we get ${\rm Ker}(\overline{\varphi^{-1}}) = (f(x))$.  Thus  the nonzero corner $u (L_K(E)/P) u$ of the prime ring $L_K(E)/P$ is isomorphic to $K[x,x^{-1}]/(f(x))$, and thus is a field, and so in particular is primitive. We now apply \cite[Theorem 1]{LRS} to conclude that $P$ is a primitive ideal of $L_K(E)$.
\end{proof}

In particular, under the bijection given in Equation (\ref{eq: 1}), the set of graded primitive ideals is mapped bijectively to the set of those maximal tails $M$ having the property  that every cycle in $M$ has an exit.

 \begin{rem}\label{remarkdescriptionofprimitives}  We note that
 Proposition \ref{descriptionofprimitives} yields that  if $P \in {\rm Spec}(L_K(E))$ is not primitive, then in fact $P$ is graded.   Furthermore, we see that every Leavitt path algebra is Jacobson; that is, $J(L(E)/P) = \{0\}$ for every $P \in {\rm Spec}(L(E))$, as follows.   If $P$ is primitive then the result is immediate.  If $P$ is not primitive then since $P$ is graded we have $L(E)/P \cong L(M)$ where $M = E / H$ (see Remark \ref{rem: 1}).  But the Jacobson radical of any Leavitt path algebra is zero (e.g.,  \cite[Proposition 6.3]{AA3}).
 \end{rem}

We can recast the bijective correspondence given in Equations (\ref{eq: 1}) and (\ref{eq: 2}) in terms of the language of stratification of Goodearl and Letzter \cite{GL}.
For each prime ideal $P\in {\rm Spec}(L_K(E))$, we let $P_0$ denote the largest graded ideal that is contained in $P$.  By the correspondence given above, we see that $M:=E^0\setminus (P_0\cap E^0)$ is a maximal tail and hence $P_0$ is a graded prime ideal.  Given a maximal tail $M$, we define
\begin{equation}
{\rm Spec}_M(L_K(E)) = \{P\in {\rm Spec}(L_K(E))~:~P\cap E^0=E^0\setminus M\}.
\end{equation}
Given a maximal tail $M$, we call ${\rm Spec}_M(L_K(E))$ the \emph{stratum corresponding to} $M$.
The graded prime ideals are in bijective correspondence with the maximal tails via the correspondence in Equation (\ref{eq: 1}), and thus
\begin{equation}
 {\rm Spec}(L_K(E)) = \bigsqcup_{M\in \mathcal{M}(E)} {\rm Spec}_M(L_K(E)).
 \end{equation}
 Moreover, if $M\in \mathcal{M}_{\gamma} = \mathcal{M}(E)\setminus \mathcal{M}_{\tau}(E)$ then $ {\rm Spec}_M(L_K(E)) $ is a singleton, consisting of exactly the graded prime ideal $I(E^0\setminus M)$ corresponding to $M$.  If $M\in \mathcal{M}_{\tau}(E)$ then we see that $ {\rm Spec}_M(L_K(E))$ is homeomorphic to ${\rm Spec}(K[x,x^{-1}])$.

 This discussion, together with Proposition \ref{descriptionofprimitives}, thus yields
  the following description of the prime spectrum of a Leavitt path algebra.

\begin{thm}\label{strat} (Stratification for Leavitt path algebras) Let $K$ be a field and let $E$ be a row-finite directed graph.  Then
\begin{equation} {\rm Spec}(L_K(E)) = \bigsqcup_{M\in \mathcal{M}(E)} {\rm Spec}_M(L_K(E)).
\end{equation}
Moreover,
\[ {\rm Spec}_M(L_K(E))\cong \left\{ \begin{array}{ll} {\rm Spec}(K[x,x^{-1}]) & {\rm if ~} M\in \mathcal{M}_{\tau}(E),\\
{\rm Spec}(K) & {\rm otherwise.} \end{array} \right. \]
In particular if the set of maximal tails is finite, then the prime spectrum is the union of a finite number of strata, with each stratum homeomorphic to a $0$- or $1$-dimensional torus, and the primitive ideals are precisely those that are maximal in their stratum.
\end{thm}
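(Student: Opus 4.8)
The plan is to assemble the theorem from the two bijective correspondences recorded in Equations (\ref{eq: 1}) and (\ref{eq: 2}), together with Proposition \ref{descriptionofprimitives} and Remark \ref{remarkdescriptionofprimitives}, so that the only genuine work lies in promoting the set-theoretic bijections to homeomorphisms and in translating ``primitive'' into ``maximal in its stratum.'' First I would establish the disjoint union. For any $P \in {\rm Spec}(L_K(E))$ put $M := E^0 \setminus (P \cap E^0)$; if $P$ is graded then $M \in \mathcal{M}(E)$ by (\ref{eq: 1}), while if $P$ is non-graded then $M \in \mathcal{M}_\tau(E)$ by \cite[Lemma 2.7]{PPSM}. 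In either case $P$ lies in ${\rm Spec}_M(L_K(E))$ and in no other stratum, so the strata partition ${\rm Spec}(L_K(E))$, yielding the displayed decomposition.

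Next I would identify the homeomorphism type of each stratum. For $M \in \mathcal{M}_\gamma(E)$ the correspondence (\ref{eq: 2}) produces no non-graded prime meeting $E^0$ in $E^0 \setminus M$, and the graded prime with that property is unique by (\ref{eq: 1}); hence ${\rm Spec}_M(L_K(E))$ is the singleton $\{I(E^0\setminus M)\}$, a one-point space homeomorphic to ${\rm Spec}(K)$. For $M \in \mathcal{M}_\tau(E)$ I would combine the unique graded prime $P_0 = I(E^0\setminus M)$ with the non-graded primes classified by (\ref{eq: 2}) to define a bijection $\Phi \colon {\rm Spec}_M(L_K(E)) \to {\rm Spec}(K[x,x^{-1}])$ sending $P_0$ to $(0)$ and each non-graded $P$ to the maximal ideal $(f_P(x))$. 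To see that $\Phi$ is a homeomorphism, I would use the corner isomorphism of Proposition \ref{structureofvL(E)v}: writing $c$ for the exit-free cycle based at $v$ and $u = v + P_0$, the corner $u\bigl(L_K(E)/P_0\bigr)u \cong K[x,x^{-1}]$ (via $L_K(E)/P_0 \cong L_K(M)$, Remark \ref{rem: 1}), and the primes of $L_K(E)/P_0$ in the stratum correspond, by contraction to this corner as in \cite[Theorem 1]{LRS}, exactly to the primes of $K[x,x^{-1}]$. Since both spaces are then one-dimensional Noetherian with closed sets consisting of the whole space together with finite sets of closed points, matching these closed sets shows that $\Phi$ and $\Phi^{-1}$ are continuous.

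Finally I would treat the ``in particular'' assertions. Finiteness of the number of strata is immediate from finiteness of $\mathcal{M}(E)$, and the two homeomorphism types ${\rm Spec}(K)$ and ${\rm Spec}(K[x,x^{-1}])$ are precisely the $0$- and $1$-dimensional tori. For the primitivity claim I would invoke Proposition \ref{descriptionofprimitives} and Remark \ref{remarkdescriptionofprimitives}, by which the non-primitive primes are exactly the graded primes $P_0 = I(E^0\setminus M)$ with $M \in \mathcal{M}_\tau(E)$. In a $\mathcal{M}_\gamma$ stratum the single point is primitive and trivially maximal. In a $\mathcal{M}_\tau$ stratum one has $P_0 \subsetneq P$ for every non-graded $P$, since $P = P_0 + L_K(E)f_P(c)L_K(E)$ with $P_0$ graded and $P$ not; moreover distinct non-graded primes correspond under $\Phi$ to distinct maximal ideals of $K[x,x^{-1}]$ and hence are incomparable (a containment $P \subseteq P'$ would restrict on the corner to a nonzero map of fields $K[x,x^{-1}]/(f_P) \to K[x,x^{-1}]/(f_{P'})$, forcing $(f_P)=(f_{P'})$). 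Thus the maximal members of each stratum are exactly the non-graded primes, which are exactly the primitive ones.

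The main obstacle is the homeomorphism in the $\mathcal{M}_\tau$ case: the bijectivity of $\Phi$ is formal, but identifying the subspace topology that ${\rm Spec}_M(L_K(E))$ inherits from ${\rm Spec}(L_K(E))$ with the Zariski topology of ${\rm Spec}(K[x,x^{-1}])$ requires controlling closures inside the stratum. The corner isomorphism $vL_K(E)v \cong K[x,x^{-1}]$ and the prime-correspondence for corners are what make this tractable, reducing the comparison to two one-dimensional Noetherian spaces whose topologies are determined by their finite sets of closed points.
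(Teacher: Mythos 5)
Your proposal is correct and follows essentially the same route as the paper: the theorem is assembled from the Aranda Pino--Pardo--Siles Molina correspondences of Equations (\ref{eq: 1}) and (\ref{eq: 2}) together with Proposition \ref{descriptionofprimitives}, exactly as in the discussion the paper gives immediately before the theorem. If anything you supply more detail than the paper does --- notably the verification that the bijection onto ${\rm Spec}(K[x,x^{-1}])$ is a homeomorphism and the incomparability of distinct non-graded primes within a stratum, both of which the paper simply asserts from the correspondence.
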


The following consequence of Proposition \ref{descriptionofprimitives} and Theorem \ref{strat} will be useful later.

\begin{cor}\label{containsverticescorollary}
 Let $P \in {\rm Spec}(L_K(E))$.  Then $P$ is primitive if and only if for every $Q \in {\rm Spec}(L_K(E))$ having $P\subsetneq Q$,  there exists a vertex $v$ with $v\in Q \setminus P$.
 \end{cor}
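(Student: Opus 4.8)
The plan is to translate the vertex condition into a statement about strata and then read off the conclusion from the structure recorded in Theorem \ref{strat}. First I observe that since $P \subseteq Q$ we always have $P \cap E^0 \subseteq Q \cap E^0$, and a vertex $v$ with $v \in Q \setminus P$ is precisely an element of $(Q \cap E^0) \setminus (P \cap E^0)$. Thus the hypothesis ``for every prime $Q \supsetneq P$ there is a vertex $v \in Q \setminus P$'' is equivalent to the assertion that no prime $Q$ strictly containing $P$ satisfies $Q \cap E^0 = P \cap E^0$; writing $M = E^0 \setminus (P \cap E^0)$ for the maximal tail attached to $P$, this says exactly that $P$ is a maximal element of its own stratum ${\rm Spec}_M(L_K(E))$. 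So the corollary reduces to the assertion that $P$ is primitive if and only if $P$ is maximal in its stratum.

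For the forward direction I would argue that a primitive $P$ is maximal in its stratum. If $P$ is graded and primitive then $M \in \mathcal{M}_{\gamma}(E)$ by Proposition \ref{descriptionofprimitives}(i), and the discussion preceding Theorem \ref{strat} shows that ${\rm Spec}_M(L_K(E))$ is the singleton $\{P\}$; hence nothing lies strictly above $P$ in it. If $P$ is non-graded, then it lies in a stratum with $M \in \mathcal{M}_{\tau}(E)$, which by Theorem \ref{strat} is homeomorphic to ${\rm Spec}(K[x,x^{-1}])$. Because a homeomorphism of prime spectra preserves the inclusion (specialization) order, and because the point of ${\rm Spec}(K[x,x^{-1}])$ corresponding to a nonzero prime is closed (maximal in the inclusion order), $P$ admits no prime strictly above it within its stratum. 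Either way $P$ is maximal in its stratum.

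For the converse I would prove the contrapositive: if $P$ is not primitive, then $P$ is not maximal in its stratum, so the vertex condition fails. By Remark \ref{remarkdescriptionofprimitives} a non-primitive $P$ is graded, and then Proposition \ref{descriptionofprimitives}(i) forces $M \in \mathcal{M}_{\tau}(E)$; thus $P$ is the (unique graded) minimal prime $P_0$ of the stratum ${\rm Spec}_M(L_K(E)) \cong {\rm Spec}(K[x,x^{-1}])$, corresponding to the zero ideal of $K[x,x^{-1}]$. Since $K[x,x^{-1}]$ is not a field -- for instance $(x-1)$ generates a nonzero maximal ideal -- the explicit correspondence of Equation (\ref{eq: 2}) produces a non-graded (hence, by Proposition \ref{descriptionofprimitives}(ii), primitive) prime $Q$ with $Q \supsetneq P$ and $Q \cap E^0 = P \cap E^0$. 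This $Q$ contains no vertex outside $P$, so the vertex condition fails, completing the contrapositive.

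The only genuinely delicate point is the claim that within an $\mathcal{M}_{\tau}$-stratum the non-graded primes are exactly the maximal elements while the graded prime is the unique minimal one; I expect this to be the main thing to nail down carefully. It follows from Theorem \ref{strat} once one notes that the asserted homeomorphism with ${\rm Spec}(K[x,x^{-1}])$ is order-preserving for inclusion of primes. If one prefers to avoid appealing to this order-preservation, one can instead read the containments directly off the explicit formulas: $P_0 = \sum_{v \in E^0\setminus M} L_K(E) v L_K(E)$ sits inside each $Q = P_0 + L_K(E) f(c(M)) L_K(E)$, and the corner isomorphism $u(L_K(E)/Q)u \cong K[x,x^{-1}]/(f(x))$ established in Proposition \ref{descriptionofprimitives} shows that distinct such $Q$ are mutually incomparable. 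Everything else is routine bookkeeping within the stratification already established.
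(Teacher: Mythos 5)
Your proof is correct and takes essentially the same route as the paper's: both arguments boil down to observing that the vertex condition is exactly maximality of $P$ in its stratum ${\rm Spec}_M(L_K(E))$, and then invoking the stratification picture (Theorem \ref{strat}, Proposition \ref{descriptionofprimitives}, Remark \ref{remarkdescriptionofprimitives}) to see that primitivity is equivalent to stratum-maximality. Your explicit set-theoretic reduction and the incomparability fallback are just a cleaner packaging of the paper's case analysis on whether $Q$ is graded or not.
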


 \begin{proof}  Let $M = E^0 \setminus (E^0\cap P)$ as usual.  Suppose $P$ is primitive.   If $P$ has $M \in \mathcal{M}_{\gamma}$ then the result is clear by the description of ${\rm Spec}(L_K(E))$.  In the other case, suppose  $P = I(H) + L(E)f(c)L(E)$ (i.e., suppose $P$ is not graded).  In particular,  $P\cap E^0 = H$.  Suppose $P \subsetneq Q$ with $Q\in {\rm Spec}(L_K(E))$.  Then since $P$ is maximal in its stratum, $Q$ cannot be of the form $I(H) + L(E)g(c)L(E)$ for this particular $H$.   If $Q$ is graded,  $Q = I(H')$ for some hereditary saturated subset $H'$ of $E$.  But then by the bijection between hereditary saturated subsets and graded ideals we have $H \subseteq H'$, and equality is not possible since $P \neq I(H)$.  If on the other hand $Q$ is not graded, then $Q = I(H') + L(E)g(c')L(E)$, and again necessarily $H \subsetneq H'$.

 Conversely, if $P$ is not primitive then any ideal maximal in the stratum of $P$ contains the same vertices as does $P$.
 \end{proof}

Although we will not specifically need the following ideas in order to establish our main result (Theorem \ref{thm: main}), to provide context for the results presented herein it is worthwhile to note that Theorem \ref{strat} can be recast in terms of rational torus actions in case $K$ is infinite.
Let $H$ be an affine algebraic group, let $K$ be an infinite field, and let $A$ be an associative $K$-algebra with a rational action of $H$ by automorphisms.   We recall that the action
of $H$ on $A$ is called \emph{rational} if there is a $K$-algebra homomorphism
$$\Delta : A \to A \otimes K[H],\qquad a\mapsto \sum_i b_i \otimes f_i$$
such that
$$h\cdot a = \sum b_i \otimes f_i(h)$$ for $h\in H$. In this case, $K[H]$ denotes the (Hopf) algebra of regular functions on $H$.  In the case that $A$ is a finitely generated commutative reduced $K$-algebra, then saying that the action of $H$ on $A$ is rational is equivalent to saying that the corresponding map $H\times {\rm Spec}(A)\to {\rm Spec}(A)$ is an algebraic morphism.

We endow $L_K(E)$ with a rational $K^{\times}$ action by declaring that if $h\in K^{\times}$ and $a\in L_K(E)$ is homogeneous of degree $n$ then
$h\cdot a = h^n a$.  (For a complete description of this action, see \cite[Section 1]{AALP}.)  We note that $K[H]\cong K[x,x^{-1}]$ and we have a $K$-algebra homomorphism
$$\Delta : L_K(E)\to L_K(E)\otimes_K K[x,x^{-1}],$$ defined by
$$\Delta(a) = a\otimes x^n$$ for $a$ homogeneous of degree $n$.  This shows that the action is indeed rational.  It is not hard to show that the $H$-invariant ideals are precisely the graded ideals when $K$ is infinite, see e.g. \cite[Proposition 1.6]{AALP} (where ``$H$-invariance" is called ``gauge invariance").  As is thus appropriately customary, we let $H$-${\rm Spec}(L_K(E))$ denote the set of $H$-invariant prime ideals of $L_K(E)$, and, given an $H$-prime $P$, we let ${\rm Spec}_P(L_K(E))$ denote the set of primes $Q$  in $L_K(E)$ with the property that $P$ is the largest $H$-prime contained in $Q$.  Given an $H$-prime $P$ we call ${\rm Spec}_P(L_K(E))$ the $H$-\emph{stratum of} $P$.   $H$-primes are therefore exactly the set of graded prime ideals in the case that $K$ is infinite, and are in bijective correspondence with the maximal tails.  Furthermore, if $M$ is the maximal tail corresponding to $P$ then ${\rm Spec}_P(L_K(E))={\rm Spec}_M(L_K(E))$.  Thus the stratification given in Theorem \ref{strat} can be recast in terms of group actions, with:  the $H$-primes in bijective correspondence with the maximal tails of $E$;  the $H$-strata being exactly the strata;  primitive $H$-primes being exactly those that are maximal in their $H$-stratum;  and the non-primitive $H$-primes being those with a $1$-dimensional $H$-stratum.

One should compare this characterization of the prime spectrum of $L_K(E)$ with Goodearl and Letzter's stratification theory \cite{GL} (see also Brown and Goodearl \cite[Chapter II.2]{BG}) for various quantum groups.   In this light, the following example is of note.

 \begin{exam}\label{tailsexample}
{\rm Let $E$ denote the directed graph given in Figure \ref{tailsfigure}.  It is straightforward to see that there are four hereditary saturated subsets whose complements in $E^0$ are maximal tails:  $H_1 = \emptyset$, $H_2 = \{v_1,v_2\}$,  $H_3 = \{v_2,v_3\}$, and $H_4 = \{v_1,v_2,v_3\}$.  (We note that the set $\{v_2\}$ is hereditary and saturated, but its complement is not a maximal tail; thus $(v_2)$ is a graded ideal which is not prime.)   Furthermore, the corresponding maximal tails  have $M_1, M_4 \in  \mathcal{M}_{\gamma}$, while $M_2,M_3 \in \mathcal{M}_{\tau}$.   Thus, using Theorem \ref{strat}, we see that the strata of ${\rm Spec}(L_{\mathbb{C}}(E))$ can be described as in the diagram.
}
\end{exam}

In particular, notice that if $E$ is the directed graph given in Figure \ref{tailsfigure}, then the prime spectrum of $L_K(E)$ is homeomorphic to the prime spectrum of the quantum plane, $\mathbb{C}_q\{x,y\}/(xy-qyx)$ with $q\in \mathbb{C}$ not a root of unity (see Figure $5$).

  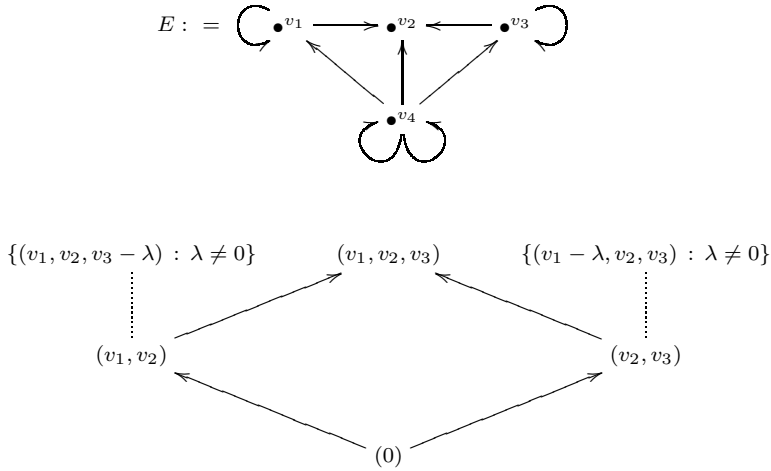
\begin{figure}\label{tailsfigure}
$$E: \ \ = \ \ \ \ \  \xymatrix{{\bullet}^{v_1}\ar@(ul,dl)  \ar[r]
& \bullet^{v_2}  & \bullet^{v_3} \ar@(ur,dr)  \ar[l] \\
& \bullet^{v_4}\ar[ul] \ar[u] \ar[ur] \ar@(d,l) \ar@(d,r) & &
}$$
\\
\bigskip
\bigskip
$$
\xymatrix{\{ (v_1,v_2,v_3-\lambda)\,:\, \lambda\neq 0\} \ar@{.}[d] &   (v_1,v_2,v_3)
&  \{(v_1-\lambda ,v_2,v_3)\,:\, \lambda\neq 0\} \ar@{.}[d]\\
 (v_1,v_2) \ar[ur] &  & (v_2,v_3)\ar[ul] \\   & (0)\ar[ul] \ar[ur]  & } $$
\caption{(above) \ A directed graph $E$ and the stratification of the prime spectrum of $L_{\mathbb{C}}(E)$}
\end{figure}

\bigskip
\bigskip

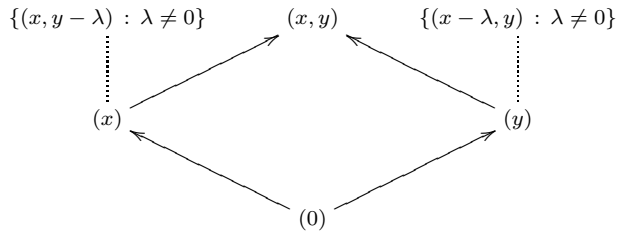
\begin{figure}\label{quantumplanefigure}
$$
\xymatrix{\{ (x,y-\lambda)\,:\, \lambda\neq 0\} \ar@{.}[d] &   (x,y)   &  \{(x-\lambda,y)\,:\, \lambda\neq 0\} \ar@{.}[d]\\
 (x)\ar[ur] &  & (y)\ar[ul] \\   & (0)\ar[ul]\ar[ur]  & } $$
\caption{(above) \ Stratification of the prime spectrum of the quantum plane}
\end{figure}
 Stratification is especially interesting in the case that the number of $H$-primes is finite.  Note that if $E$ is a finite directed graph then $\mathcal{M}(E)$ is finite and so $H$-${\rm Spec}(L_K(E))$ is finite in this case.


\section{The Dixmier-Moeglin Equivalence}
\label{DM}

The following well-known property of primitive ideals (see e.g. Brown and Goodearl \cite[Lemma II.7.13]{BG}) will play a key role in our main result.

\begin{prop}\label{algebraicbydimensionprop}  Let $K$ be a field and let $A$ be a primitive $K$ algebra.  If ${\rm dim}_K(A)<\# K$, then $\extcent{A}$ is algebraic over $K$.\label{rem: 2}
\end{prop}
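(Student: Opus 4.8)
The plan is to realize $\extcent{A}$ inside the endomorphism division ring of a faithful simple module and then finish with a cardinality (Amitsur-type) argument. Since $A$ is primitive it carries a faithful simple right module $M$, and by Schur's lemma $D:=\End_A(M)$ is a division $K$-algebra (acting on $M$ on the left). The three ingredients I would assemble are: (a) a $K$-algebra embedding $\extcent{A}\embeds Z(D)$; (b) the dimension estimate $\dim_K D<\#K$; and (c) the elementary fact that a division $K$-algebra of dimension strictly less than $\#K$ is algebraic over $K$. Combining (a)--(c) shows that $Z(D)$, and hence $\extcent{A}$, is algebraic over $K$.

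For (b) I would argue with two elementary linear maps. Fix $0\neq m_0\in M$. Because $M$ is simple, $m_0A=M$, so $a\mapsto m_0a$ is a $K$-linear surjection $A\to M$, whence $\dim_K M\le \dim_K A$. Next, $d\mapsto d(m_0)$ is a $K$-linear map $D\to M$; it is injective, for if $d(m_0)=0$ then $d(M)=d(m_0A)=d(m_0)A=0$ (using that $d$ is right $A$-linear and $m_0A=M$), so $d=0$. Therefore $\dim_K D\le \dim_K M\le \dim_K A<\#K$.

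For (c) (Amitsur's lemma) I would suppose some $t\in D$ is transcendental over $K$. Since $D$ is a division ring, the commutative domain $K[t]$ generates a subfield $K(t)\subseteq D$, and in $K(t)$ the elements $(t-\lambda)^{-1}$ for $\lambda\in K$ are defined (each $t-\lambda\neq 0$ is invertible) and $K$-linearly independent by the usual partial-fraction argument. This exhibits $\#K$ many $K$-independent elements of $D$, contradicting $\dim_K D<\#K$. Hence every element of $D$ is algebraic over $K$, so $D$, and in particular its center $Z(D)$, is algebraic over $K$. (When $K$ is finite the hypothesis already forces $\dim_K D<\infty$, so $D$ is algebraic for trivial reasons.) By (a), $\extcent{A}$ embeds into the algebraic extension $Z(D)$ of $K$, and is therefore itself algebraic over $K$, as required.

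The only non-elementary input is step (a): that the extended centroid of a primitive ring embeds into the center of the endomorphism division ring of a faithful simple module. This is where the genuine content sits, and I expect it to be the main obstacle to a fully self-contained write-up. The natural mechanism is that a bimodule representative $[(I,f)]\in\extcent{A}$ acts on $M$ (this is sensible because $MI=M$ and no nonzero element of $M$ is annihilated by $I$ on the right, the latter since $mI=0$ with $m\neq 0$ would give $MI=mAI\subseteq mI=0$, contradicting $MI=M$), and the resulting operator is central in $D$. Verifying that this action is well defined and central, however, is precisely the part that requires the structure theory of the Martindale ring of quotients, so I would quote it from the standard references on the extended centroid (e.g. \cite{BMM}); after that, the elementary package (b)+(c) finishes the proof.
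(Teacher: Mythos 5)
Your proposal is correct and follows essentially the same route as the paper's proof: realize $\extcent{A}$ inside $\End_A(V)$ for a faithful simple right module $V$, bound its $K$-dimension by $\dim_K A < \# K$, and conclude algebraicity by the Amitsur-type cardinality argument. The only difference is one of emphasis rather than substance: the paper explicitly constructs the embedding (choosing a representative $(I,f)$ with $I=(a)$ principal and sending $[(I,f)]$ to the endomorphism $vax\mapsto vf(a)x$) while citing Brown--Goodearl for the dimension-implies-algebraic step, whereas you prove the dimension bounds and the Amitsur lemma in detail but cite the embedding --- the step you rightly identify as the genuine content --- from the literature.
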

\begin{proof} Let $V$ be a faithful simple right $A$-module  We note that $\extcent{A}$ embeds in ${\rm End}_A(V)$ as follows.  Given a nonzero ideal $I$ and a bimodule homomorphism $f:I\to A$, we have an element $[(I,f)]\in \extcent{A}$.  We may pick an equivalence class representative in which $I=(a)$ for some nonzero $a\in A$.  Let $b=f(a)$.  Then $VaA=V$ and we create an endomorphism $\phi_f:V\to V$ by $vax\mapsto vbx$ for $v\in V$ and $x\in A$.  Thus $${\rm dim}_K(\extcent{A})\le {\rm dim}\left( {\rm End}_A(V)\right) \le {\rm dim}_K\, A < \# K.$$
But this inequality yields  that $\extcent{A}$ is algebraic over $K$ (see e.g. \cite[proof of Proposition II.7.16]{BG}).
\end{proof}

We are now in position to prove our main result.

\medskip

{\bf Theorem \ref{thm: main}}.
Let $E$ be a finite directed graph and let $K$ be a field.  Then the Dixmier-Moeglin equivalence holds for prime ideals of $L_K(E)$; that is, these conditions on a prime ideal $P$ of $L_K(E)$ are equivalent: \begin{enumerate}
\item $P$ is primitive;
\item $P$ is rational;
\item $P$ is locally closed in ${\rm Spec}(L_K(E))$.
\end{enumerate}

\begin{proof}
   Let $P$ be a prime ideal in $L_K(E)$. As usual we let $M$ denote $E^0 \setminus (E^0 \cap P)$.

\smallskip

We start by showing that $P$ is primitive if and only if $P$ is locally closed.
Suppose $P$ is primitive.  Using Corollary \ref{containsverticescorollary} we get
  $$\bigcap\{Q \ | \ Q\in {\rm Spec}(L(E)), P \subsetneq Q\} \ \supseteq \ \bigcap\{P + L(E)vL(E) \ | \ v\not\in P\}.$$
  But this latter set contains  the (finite) product of ideals  $$\prod_{v\not\in P}(P + L(E)vL(E)).$$  Since $P$ is prime and none of the $P + L(E)vL(E)$ is contained in $P$ this product must contain $P$ strictly, which yields that $P$ is locally closed.

\medskip

Now suppose $P$ is not primitive.  Since $L_K(E)$ is Jacobson (Remark \ref{remarkdescriptionofprimitives}), it is well known (e.g., see Brown and Goodearl \cite[Lemma II.7.11]{BG}) that $P$ is not locally closed; we include a short proof here for clarity.  We show $$\bigcap\{Q \ | \ Q\in {\rm Spec}(L(E)), P \subsetneq Q\} \ = \ P.$$   Since $J(L(E)/P) = \{0\}$  (Remark   \ref{remarkdescriptionofprimitives}), we have
 $$\bigcap\{\mathcal{Q} \ | \ \mathcal{Q}\in {\rm Prim}(L(E)/P)\} =\{0\}.$$
Since the primitive ideals of $L(E)/P$ are precisely the images of the primitive ideals of $L(E)$ which contain $P$, this gives
 $$ \bigcap\{Q \ | \ Q\in {\rm Prim}(L(E)), P\subseteq Q\} \ = \ P.$$
But $P$ itself is not primitive, so this last equality can be written as
 $$ \bigcap\{Q \ | \ Q\in {\rm Prim}(L(E)), P\subsetneq Q\} \ = \ P  ,$$
which immediately gives
$$ \bigcap\{Q \ | \ Q\in {\rm Spec}(L(E)), P\subsetneq Q\} \ = \ P$$
as desired.

\smallskip

We now show that $P$ is primitive if and only if $P$ is rational.   First, suppose $P$ is not primitive.  Then by Remark \ref{rem: 1} $L_K(E)/P \cong L_K(M)$, where $M$ is a graph which contains a cycle without exits.  Denote the cycle by $c$, and its base vertex by $v=s(c)=r(c)$.  In this setting we build an element of the extended centroid of $L_K(M)$ which is transcendental over $K$; we start by defining the key  bimodule homomorphism.  Specifically,
consider the map
$$f : L_K(M) v L_K(M) \rightarrow L_K(M) \ \ \mbox{defined by setting} \ \  f(avb)=acb$$
 for all $a,b\in L_K(M)$,  and extending $K$-linearly.  Once the well-definedness of $f$ is established, $f$ will clearly be  an $L_K(M)$-$L_K(M)$ bimodule homomorphism.

We first consider the case when $K$ is infinite.  It suffices to show that whenever
$\sum_{i=1}^m a_i v b_i = 0$, then $\sum_{i=1}^m a_i c b_i =0$.  To the contrary, suppose there are elements $a_1,\ldots,a_m,b_1,\ldots,b_m$ in $L_K(M)$ such that
$\sum_{i=1}^m a_i v b_i =0$, while   $y:=\sum_{i=1}^m a_i c b_i$ is nonzero.
Then we get
$$y=\sum_{i=1}^m a_i(c-\lambda v)b_i$$
 for all $\lambda\in  K$.
In particular, $y$ is in the ideal $L_K(M) (c-\lambda v)L_K(M)$ for all $\lambda \in K$, and so
$$\bigcap_{\lambda \in K} L_K(M) (c-\lambda v)L_K(M)$$
 is a nonzero ideal of $L_K(M)$.
But since $v$ is idempotent we have
$$ v\cdot  [ \bigcap_{\lambda \in K}  L_K(M) (c-\lambda v)L_K(M) ]\cdot v \ = \ \bigcap_{\lambda \in K} v L_K(M) (c-\lambda v)L_K(M)v $$
$$= \bigcap_{\lambda \in K} vL_K(M)v (c-\lambda v) vL_K(M)v,$$
so that $\cap_{\lambda \in K} vL_K(M)v (c-\lambda v) vL_K(M)v$ is nonzero (as it contains $y$).

  Now using the isomorphism $\varphi$ described in Proposition \ref{structureofvL(E)v}, this yields that
 $$0 \ \neq \ \bigcap_{\lambda \in K} K[x,x^{-1}](x-\lambda)K[x,x^{-1}].$$
 But, as $K$ is infinite, this last intersection is $0$ in $K[x,x^{-1}]$, a contradiction.

Thus $f$   is well-defined when $K$ is infinite.  To establish the well-definedness of $f$  for all fields,  suppose $K$ is finite,  and let $K'$ be an infinite extension of $K$.  Then
$L_{K'}(E) = L_K(E) \otimes_K K'$, and, by the just-established result,   the  map $f^{\prime}:L_{K'}(E)vL_{K'}(E) \rightarrow L_{K'}(E)$ given by $f^{\prime}: avb \mapsto acb$ is well-defined. But  the restriction of $f^{\prime}$  to $L_K(E)vL_K(E)$ is $f$, and so  we get that $f$ is well-defined as well.

Since $f$ is a bimodule homomorphism, by the remarks made subsequent to Definition \ref{extendedcentroiddef} we have that the equivalence class of  $(f,L_K(M)vL_K(M))$ is in $Z(\mart{L_K(M)}$.
We now show that this equivalence class is transcendental over $K$.

If not, there is some nonzero ideal $I$ of $L_K(M)$ such that $f, f^2, \ldots, f^d$ are defined on $I$, and elements $a_0, a_1, \dots, a_d$ of $K$ (not all zero) for which the restriction of
$$g:=a_0  + a_1 f + \cdots +a_d f^d$$
 on $I$ is identically zero.
But $L_K(M)$ is prime and hence $$0 \ \neq \ L_K(M)vL_K(M)\cdot I \cdot L_K(M)vL_K(M) = L_K(M)vIvL_K(M),$$ so that $0\neq vIv = I\cap vL_K(M)v$.
In particular,  $g$  is zero on the nonzero ideal $I \cap v L_K(M)v$ of $v L_K(M)v$.
So again invoking the isomorphism of Proposition \ref{structureofvL(E)v}, this yields that there is a nonzero ideal $T$ of $K[x,x^{-1}]$ on which $\varphi \circ g \circ \varphi^{-1}$ is zero. Now pick any nonzero $z = p(x) \in T$.   Note that by definition we have  $f(v) = c$,  $f(c) = f(cvv) = ccv = c^2$, and,   more generally,  for any $1\leq j\leq d$ we have $f^j(c) = c^{j+1}$.  Using this,   we get easily  that
$$\varphi \circ g \circ \varphi^{-1}(p(x)) =  (a_0+a_1 x+\cdots+a_d x^d ) p(x),$$
 which is nonzero in $K[x,x^{-1}]$, a contradiction.

 Thus the equivalence class of $(f, L_K(M)vL_K(M))$ is transcendental over  $K$, thereby yielding  that $Z(\mart{L_K(M)} \cong Z(\mart{L_K(E)/P} $ is not algebraic over $K$, so that $P$ is not rational.
 \bigskip


Thus it only remains to show that if $P$ is primitive then $P$ is rational.
  So suppose that $P$ is primitive.  Let $K'$ be an uncountable purely transcendental field extension of $K$.  Note that
$L_{K'}(E)\cong L_K(E)\otimes_K K'$ and
$P':=P\otimes_K K'$ is a prime ideal of $L_{K'}(E)$ by Theorem \ref{strat}.  Moreover, $P'$ is maximal in its stratum, as $P$ is maximal in its stratum (and $K'$ transcendental over $K$ ensures that irreducibility of polynomials passes between $K[x,x^{-1}]$ and $K'[x,x^{-1}]$).  Thus  $P'$ is primitive by Theorem \ref{strat}.
We let $R=L_K(E)/P$ and let $R'=L_{K'}(E)/P'$.
Notice we have an injective homomorphism $$\Phi:\extcent{R}\otimes_K K'\to \extcent{R'}$$ defined as follows.  If $I$ is a nonzero ideal of $R$ and $f:I\to A$ is a bimodule homomorphism then if $[(I,f)]\in \extcent{R}$ is the equivalence class associated to $f$, we define $\Phi([(I,f)])$ to be
the class $[(I',f')]$, where $f'=f\otimes {\rm id}: I\otimes_K K' \to R\otimes_K K' = R'$.  Since $R'$ is primitive and
$${\rm dim}_{K'} R' \ < \ \# K'
$$
 (note that as $E$ is finite we have ${\rm dim}_{k} (L_k(E))$ is at most countable for any field $k$), we conclude that $\extcent{R'}$ is algebraic over $K'$ by Proposition \ref{algebraicbydimensionprop}.
Given $[(I,f)]\in\extcent{R}$ we see that the bimodule homomorphism $[(I',f')]$ is algebraic over $K'$.  Thus there exists some nonzero ideal $J'\subseteq I'$ of $R'$ and some natural number $n$ such that
$$K' (f\otimes {\rm id})^n|_{J'} + \cdots + K'(f\otimes {\rm id})|_{J'}+K'{\rm id}|_{J'}$$ is not direct.   Thus
$$K f^n|_J + \cdots + K f|_J + K{\rm id}|_J$$ is not direct, where $J=J'\cap R$.  Hence $[(I,f)]$ is algebraic over $K$ and so we see that $\extcent{R}$ is algebraic over $K$ and thus $P$ is rational.  This completes the proof that $P$ primitive implies $P$ rational, and thereby completes the proof of the main theorem.
 \end{proof}

\bigskip

We conclude this article with an easy construction which yields nontrivial examples of prime spectra for Leavitt path algebras.        For any positive integer $n$ let $S \sqcup T$ be any partition of $\{1,2,\ldots,n\}$.   Let $E = E_{n,S}$ be the graph defined as follows:
  $$E^0 = \{v_1, v_2, \ldots, v_n\}$$
  $$E^1 = \{e_1, e_2, \ldots e_{n-1}\} \ \sqcup \ \{f_1, f_2, \ldots ,f_n\} \ \sqcup \ \{g_{i_1}, \ldots, g_{i_{|S|}}\},$$
  where
  $$r(e_i) = v_i, \ s(e_i) = v_{i+1}  \ (1\leq i \leq n-1); \ \ \ s(f_i)=r(f_i)=v_i \ (1\leq i \leq n); $$
   $$\ \mbox{and} \ \ \ s(g_{i_j}) = v_{i_j} \ (1 \leq i_j \leq |S|).$$

  In words, $E$ is the graph with $n$ vertices, connected one to the next by the edges $e_1, \ldots ,e_{n-1}$, for which there are two loops at  $v_i$ when $i\in S$, and one loop at $v_i$ otherwise.

  We define $H_0 = \emptyset$, and for each $1\leq i \leq n$ we define $H_i = \{v_1,v_2,\ldots,v_i\}$.  Then a straightforward check yields that the $H_i$ are the only  hereditary saturated subsets of $E^0$, and that they form a chain
  $$\emptyset = H_0 \subsetneq H_1 \subsetneq \cdots \subsetneq H_n = E^0.$$
  Thus  the collection of graded ideals in $L_K(E)$ is the chain
  $$ \{0\} = I(H_0) \subsetneq I(H_1) \subsetneq \cdots \subsetneq I(H_n) = L_K(E).$$
  Furthermore, for $0\leq i \leq n-1$,  each $M_i = E^0 \setminus H_i$ is easily seen to be a maximal tail, so that in fact all the (proper) graded ideals of $L_K(E)$ are graded prime.  Finally,  the cycle (loop) $f_{i+1}$ based at vertex $v_{i+1}$ of $M_i$ has an exit in $M_i$ precisely when there is a second loop $g_{i+1}$ based at $v_{i+1}$.  Thus  we see that ${\rm gr}\mbox{-}{\rm Spec}(L_K(E))$ is the chain $\{P_i = I(H_i) \ | \ 0\leq i \leq n-1\}$ of length $n$, where, for each $P_i$ in the chain, $P_i$ is primitive if and only if $i\in S$.

    For instance, if $n=4$ and $S = \{1,3\}$, then

$$E_{4,\{1,3\}} \  = \ \xymatrix{{\bullet}^{v_4}\ar@(ul,ur)^{f_4}  \ar[r]^{e_3}
& \bullet^{v_3}\ar@(ul,ur)^{f_3}\ar@(dr,dl)^{g_3} \ar[r]^{e_2} & \bullet^{v_2}\ar@(ul,ur)^{f_2}  \ar[r]^{e_1} & \bullet^{v_1}\ar@(ul,ur)^{f_1}\ar@(dr,dl)^{g_1}}$$

\bigskip

In a subsequent article, we consider these naturally arising questions:
\begin{enumerate}
\item What is the structure of the poset of graded prime ideals of a Leavitt path algebra under inclusion?
\item Can we enumerate the prime and primitive graded prime ideals in nice families of Leavitt path algebras?
\end{enumerate}
These questions are in fact quite natural to ask, as algebras which support a rational algebraic group action that stratifies the prime spectrum into finitely many strata often have the property that the prime spectrum has an interesting combinatorial structure.  For example, in the case that we are dealing with the quantized coordinate ring of $m\times n$ matrices, Cauchon \cite{C} has given a beautiful closed formula for the number of torus-invariant prime ideals.  Launois \cite{Launois} has shown that the poset of torus-invariant prime ideals is isomorphic to the poset consisting of permutations $\sigma\in S_{n+m}$ such that $i-m\le \sigma(i)\le i+n$ for all $i\in \{1,2,\ldots ,n+m\}$, in which the order is the reverse Bruhat order on permutations.  Recently, the primitive torus-invariant prime ideals have been enumerated \cite{BCL}, \cite{BLL}, \cite{BLN}.

\end{document}